\documentclass[journal, twoside]{IEEEtran}
\usepackage{amsmath,amsfonts}
\usepackage{algorithmic}
\usepackage{algorithm}
\usepackage{array}
\usepackage{textcomp}
\usepackage{stfloats}
\usepackage{url}
\usepackage{verbatim}
\usepackage{graphicx}
\usepackage{cite}
\usepackage{amsthm}
\usepackage{multirow}
\usepackage[caption=false,
labelfont=sf,textfont=sf]{subfig}
\def\BibTeX{{\rm B\kern-.05em{\sc i\kern-.025em b}\kern-.08em
    T\kern-.1667em\lower.7ex\hbox{E}\kern-.125emX}}
\newtheorem{theorem}{Theorem}[]

\theoremstyle{definition}

\begin{document}

\title{An Exact Algorithm for Load-Dependent Traveling Salesman Problem for Unmanned Aerial Vehicle Package Delivery
\\
\thanks{Deepak Prakash Kumar is with the Department of Electrical Engineering and Computer Science, University of California, Irvine, CA 92697, USA (e-mail: {\tt\small deepakprakash1997@gmail.com}).

Saurabh Belgaonkar and Swaroop Darbha are with the Department of Mechanical Engineering, Texas A\&M University, College Station, TX 77843, USA (e-mail: {\tt\small saurabhbelgaonkar@tamu.edu, dswaroop@tamu.edu}).

Sivakumar Rathinam is with the Computer Science and Engineering Department and with the Mechanical Engineering Department, Texas A\&M University, College Station, TX 77843, USA (e-mail: {\tt\small srathinam@tamu.edu}).

David Casbeer is with the Control Science Center, Air Force Research Laboratory, Wright-Patterson Air Force Base, OH 45433 USA (e-mail:
{\tt\small david.casbeer@us.af.mil}).

DISTRIBUTION STATEMENT A. Approved for public release. Distribution is unlimited. AFRL-2025-5307; Cleared 11/25/2025.}
}

\author{Deepak Prakash Kumar, Saurabh Belgaonkar, Sivakumar Rathinam, Swaroop Darbha, David W. Casbeer}

\markboth{IEEE Transactions.}%
{Kumar \MakeLowercase{\textit{et al.}}: Exact Algorithm for Load-Dependent TSP for UAV Package Delivery}

\maketitle

\bstctlcite{IEEEexample:BSTcontrol}

\begin{abstract}
    In this article, we present a novel formulation for the load-dependent traveling salesman problem (LD-TSP), in which travel cost (or energy expended) depends on the vehicle’s current load. This problem is relevant for package delivery and urban air mobility, where vehicles must transport and drop cargo at specified locations. The challenge lies in modeling the cost, which varies with both route sequence and onboard load. Our key contributions are: (i) formulating an energy dissipation model and proving energy expenditure depends linearly on vehicle mass and distance; and (ii) formulating a mixed-integer nonlinear programming formulation and providing a novel relaxation to obtain a mixed-integer linear program. Extensive numerical results show that optimal solutions for most instances with up to 50 targets are obtained within one minute. For unsolved instances within a 10-minute limit, optimality gaps are under $13\%$, highlighting the formulation's tightness. We further benchmark our approach against three proposed baseline formulations and another algorithm from a related problem, and demonstrate that our formulation outperforms all baselines.
\end{abstract}

\section{Introduction}



The Traveling Salesman Problem (TSP) is a fundamental problem in combinatorial optimization, aiming to find the shortest tour that starts and ends at a depot while visiting a set of targets. TSP and its variants have broad applications, including unmanned aerial vehicle (UAV) routing. Although it is NP-hard, many formulations and generalizations have enabled efficient computational solutions.

This paper addresses the Load-Dependent Traveling Salesman Problem (LD-TSP), a variant of the classic TSP. In the LD-TSP, the cost of traveling between two locations (e.g., energy or fuel consumption) is not fixed; instead, it depends on the vehicle's current load. As the load decreases with each delivery, edge costs vary with the visitation sequence, making LD-TSP more complex than traditional TSP. 
Figure~\ref{fig: example_tour_package} illustrates a typical LD-TSP solution and how the vehicle mass varies as packages are delivered.

\begin{figure}[htb!]
    \centering
    \includegraphics[width = 0.8\linewidth]{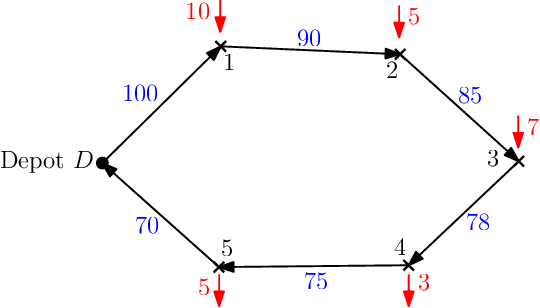}
    \caption{Example tour depicting dropping off packages and the mass of the vehicle after dropping each package}
    \label{fig: example_tour_package}
\end{figure}

LD-TSP has extensive applications, including routing cargo drones (i.e., UAV) or ground vehicles for package delivery. For UAVs, the carried load substantially affects fuel consumption and flight time. For example, a BlueHalo IE-V3 UAV with a 12~Ah battery sees its flight time decrease from 25 minutes without a load to under 15 minutes with a 5~lb package~\cite{bluehalo}. Accurate energy consumption modeling that accounts for changing load is thus essential for effective routing.

In~\cite{branch_cut_price_load_tsp}, a branch-cut-and-price algorithm for routing drones and trucks uses a non-linear energy model from~\cite{Dorling}. However, the model in~\cite{Dorling} overestimates flight power consumption for vehicle stages such as hovering, making it less accurate. In~\cite{Torabbeigi2020}, a two-stage approach selects depot locations and assigns drones using a linear payload-dependent power model derived using experiments. However, the focus is on minimizing drone usage rather than route optimization, leading to a mixed-integer linear program. In~\cite{drone_delivery_paths}, depot selection and drone scheduling are considered together using a non-linear power model that incorporates capacity and flight-time constraints.

While previous studies have focused on load-dependent vehicle routing with capacity constraints, primarily for drones with short flight times, the LD-TSP variant remains less explored. LD-TSP is especially relevant for vehicles with longer flight durations, including those listed in~\cite{drones_flight_time}, as well as ground vehicles. Load-dependent routing was examined in~\cite{MENDEZDIAZ20083223} for a Hamiltonian path problem with homogeneous package delivery. An integer program was solved via branch and bound, and additional valid inequalities were proposed to strengthen the linear relaxation lower bound. 
A related work considers a load-dependent vehicle routing problem, which minimizes the total distance traveled multiplied by vehicle gross weight~\cite{zachariadis2015load}. In contrast, we address a single-vehicle LD-TSP with heterogeneous payloads, where edge traversal costs explicitly depend on the remaining carried load as deliveries are made along the tour.

The most relevant contribution is~\cite{load_sequence_risk}, which studies a load- and sequence-dependent risk TSP for transporting hazardous cargo (hazmat TSP). Here, the edge cost is linearly proportional to the on-board load. An $A^*$-based tree search algorithm is used for optimal solutions, relying on a zero-cost return to the depot after all cargo is delivered. However, this assumption does not apply to our LD-TSP, as the vehicle continues to expend energy even after all packages are delivered and the final edge cost is not zero. 
In fact, our LD-TSP generalizes the hazmat TSP; by setting the vehicle's unladen mass (i.e., mass of vehicle without packages) to zero, we can mimic the zero-cost return scenario.
From our literature survey, we can observe that existing studies consider load-dependent vehicle routing or path problems; however, the LD-TSP itself remains unaddressed. Moreover, a rigorous derivation linking energy consumption to payload is lacking. In this work, we (i) propose a novel energy consumption model and show it depends linearly on the carried load, (ii) formulate the LD-TSP as a mixed-integer nonlinear program (MINLP) and introduce a relaxation to obtain a mixed-integer linear program (MILP), which we solve using Gurobi, and (iii) present extensive computational results, solving instances with up to 50 targets to optimality in under one minute and, achieve solutions within $13\%$ of optimality for instances with 70 targets. (iv) We benchmark our formulation against three baseline formulations: two MILP variations, one MINLP. Additionally, we benchmark against the algorithm from~\cite{load_sequence_risk} for the hazmat TSP. We demonstrate that our approach achieves better solution quality or shorter computation time compared to all baselines.

\section{Problem Formulation}

The key challenge in LD-TSP is accurately determining the energy required to travel between locations. We first present an energy dissipation model for a vehicle operating in a constant wind field.\footnotemark\footnotetext{Although wind effects are not considered in our LD-TSP formulation, we include wind in the energy dissipation model for generality.} We then formulate the optimal control problem to minimize energy, and show that this is equivalent to minimizing time or distance. This equivalence also holds in the absence of wind, which is the case for our LD-TSP routing study.

\subsection{Energy dissipation model}
Consider an agent moving through a medium with constant speed \(V_0\) relative to the medium. The power dissipation \(P\) required from the on-board energy source is given by the dot product of the dissipative force \({\bf F}_d\) and the agent’s velocity \({\bf v}\). The dissipative force acts opposite to the longitudinal direction of motion and has magnitude \(\phi(V_0)\), which depends solely on \(V_0\). The agent’s velocity comprises its speed and the medium’s velocity \({\bf v}_w\). The component of power dissipation due to the agent's motion is \(\phi(V_0)V_0\). When the medium is moving, an additional component arises that depends on the heading angle \(\theta\) between the agent and the medium, given by \(\phi(V_0)v_w\cos\theta\). Consequently, the total dissipated power can be expressed as
\begin{align} \label{eq: power}
    P(\theta) = P_0 + P_1\cos\theta,
\end{align}
where $P_0 = \phi(V_0)V_0,\ P_1 = \phi(V_0)v_w$.
The assumption \(v_w < V_0\) ensures that the agent can overcome the medium’s influence, maintain controllability, and avoid prohibitive energy costs for upstream motion, thereby guaranteeing the feasibility of navigation. We provide a detailed derivation of the power model and list the assumptions in the Appendix.

Since every segment of the route must be energy-efficient, we pose the following motion planning for the vehicle to efficiently transport packages from the $i^{th}$  to the $j^{th}$ target. 
\begin{align}
C_{ij} := \min \quad \int_0^T (P_0 + P_1 \cos \theta(t)) \; dt, 
\end{align}
subject to the vehicle's kinematic constraints, given by 
\begin{align}
\label{eq:kinematics}
\dot x (t) = V_0 \cos \theta (t) + v_w, \,\, \dot y (t) = V_0 \sin \theta (t), \,\, \dot \theta (t) = u (t), 
\end{align}
and the following specified boundary conditions:
\begin{align} 
\label{eq:BC1}
(x(0), y(0), \theta(0)) = (x_i, y_i, \theta_i), \\ 
\label{eq:BC2} (x(T), y(T), \theta(T)) = (x_j, y_j, \theta_j). \end{align}
Here, $x, y$ denote the 2D coordinates {in the global frame}, and $u$ controls the rate of orientation change.

{\bf Remark:}
Note that the integrand in the objective function represents power consumption; thus, the objective is to minimize total energy while satisfying kinematic constraints and boundary conditions. Here, $T$ is treated as a free variable.

\subsection{Equivalence of time optimal and energy optimal paths}

We prove the equivalence of minimizing energy and distance in the following theorem.
\begin{theorem} \label{theorem}
    A minimum energy path is equivalent to a minimum time path.
\end{theorem}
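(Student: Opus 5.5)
The idea is to rewrite the energy integral using the kinematics in \eqref{eq:kinematics}. The only time-varying part of the integrand is $P_1\cos\theta(t)$. Since $P_1=\phi(V_0)v_w$, this term can be expressed through $\dot x$. From \eqref{eq:kinematics} we have $V_0\cos\theta(t)=\dot x(t)-v_w$, so
\begin{align*}
\int_0^T \bigl(P_0+P_1\cos\theta(t)\bigr)\,dt
&= P_0T+\frac{P_1}{V_0}\int_0^T\bigl(\dot x(t)-v_w\bigr)\,dt\\
&= \Bigl(P_0-\frac{P_1 v_w}{V_0}\Bigr)T+\frac{P_1}{V_0}\,(x_j-x_i).
\end{align*}
The second term depends only on the boundary conditions \eqref{eq:BC1}--\eqref{eq:BC2}. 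It is therefore the same constant for every admissible trajectory, whatever control $u(\cdot)$ and final time $T$ are chosen.

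The key steps, in order, are as follows.
\begin{enumerate}
\item Substitute the kinematic relation for $\dot x$ into the cost.
\item Integrate $\dot x$ exactly, using the fundamental theorem of calculus. The trajectory is absolutely continuous, since $\theta$ is driven by a measurable bounded $u$.
\item Factor the remaining coefficient as $\phi(V_0)\bigl(V_0-v_w^2/V_0\bigr)=\phi(V_0)(V_0^2-v_w^2)/V_0$.
\item Note that this coefficient is strictly positive. This follows from the standing assumption $v_w<V_0$ and $\phi(V_0)>0$, which holds because the force is dissipative.
\end{enumerate}
The energy is then a positive affine function of $T$, with an offset that is independent of the trajectory. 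Minimizing energy over the admissible set is therefore equivalent to minimizing $T$. The two problems have the same minimizers, and their optimal values are related by the affine map.

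The main point needing care is the sign of the coefficient of $T$. If it were zero or negative, the equivalence would fail or reverse into time maximization. So I will state explicitly that $v_w<V_0$ is exactly what is needed, and that $\phi(V_0)>0$. A secondary subtlety is that the admissible set must be identical for both problems: the same dynamics, the same boundary conditions, and free $T$. I will also remark on the wind-free case $v_w=0$, which is the one used later in the routing study. There $P_1=0$ and the energy is $P_0T=\phi(V_0)\,L$, where $L=V_0T$ is the path length. This gives the equivalence with minimum distance, and shows that the cost is proportional to $\phi(V_0)$, which is how the load dependence enters later.
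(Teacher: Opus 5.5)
Your proposal is correct and follows essentially the same route as the paper: substitute $\cos\theta=(\dot x-v_w)/V_0$ from the kinematics, integrate $\dot x$ to write the energy as $\bigl(P_0-P_1v_w/V_0\bigr)T+\frac{P_1}{V_0}(x_j-x_i)$, and conclude from the positive coefficient of $T$ that energy and time have the same minimizers. Your factorization $\phi(V_0)(V_0^2-v_w^2)/V_0>0$ supplies the justification for positivity that the paper only asserts.
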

\begin{proof}
    Let \begin{align}
c_{ij} = \quad \min \; \int_0^T dt, 
\end{align} 
subject to kinematic constraints given by \eqref{eq:kinematics} and the following specified boundary conditions \eqref{eq:BC1} and \eqref{eq:BC2}.
Noting that $\cos \theta(t) = \frac{\dot x (t) - v_w}{V_0},$ and noting that $v_w, V_0$ are constants, we can see that 
\begin{align}
\begin{split}
    C_{ij}^* &=  \min \; \int_0^T \left(P_0 + P_1  \frac{\dot x (t) - v_w}{V_0} \right) \; dt \\
    &= \int_0^T \left(P_0 -P_1 \frac{v_w}{V_0} \right) \; dt + \frac{P_1}{V_0} \left(x_j-x_i \right).
\end{split}
\end{align}
Since $\left(P_0 -P_1 \frac{v_w}{V_0} \right) >0$ and $x_j - x_i$ depends only on the boundary conditions, 
\begin{align*}
    C_{ij}^* = \left(P_0 - P_1 \frac{v_w}{V_0} \right) c_{ij} + \frac{P_1}{V_0} (x_j - x_i).
\end{align*}
As the second term is a constant and $c_{ij}$ is the only varying component of the first term, which represents time, minimizing time clearly minimizes the energy expended. Hence, time-optimal and energy-optimal paths are equivalent.
\end{proof}

Let $M$ be the nominal/unladen mass of the vehicle (when it returns to the depot), and let $M_i$ represent its mass at the $i^{th}$ target after delivering a package of mass $m_i$ to the $i^{th}$ customer. 
Let $C_{ij}$ correspond to the energy cost associated with the vehicle having a nominal mass $M$. Since the lift needed by the drone is directly proportional to its load, we may represent the cost of travel when the mass is $M_i$ to be $\frac{M_i}{M}C_{ij}$. 

\textbf{Remark:} In a {\it still} medium, $v_w = 0$, and the term $P_1 = 0.$ Hence, the energy dissipated, given by $\frac{M_i}{M} C_{ij},$ can be equivalently represented in terms of the distance between the locations $i$ and $j,$ denoted by $d_{ij},$ as $\alpha M_i d_{ij}.$ Hence, $\alpha$ is a constant dependent on the drag coefficient $C_d$ and frontal area $A_f,$ which are influenced by the dimensions of the vehicle, and the nominal/unladen mass $M$ of the vehicle. Therefore, $\alpha$ is a constant.

\section{Nonlinear Program for LD-TSP}

We begin by defining the notation used in our mathematical formulation.
Let $T$ be the set of target locations, and $m_t$ the mass to be dropped at $t \in T$. Let $D$ denote the depot, where the vehicle begins after picking up all packages destined for $T$. Define $E$ as the set of edges $(i, j)$ with $i, j \in T \bigcup \{D\}$; traversing edge $(i, j)$ incurs energy consumption $C_{ij}$ and distance $d_{ij}$. For any $S \subset T \cup \{D\}$, $S \neq \emptyset$, let $\delta^+(S) = \{(i, j): i \in S,\, j \notin S\}$ denote outgoing edges from $S$, and $\delta^-(S) = \{(i, j): i \notin S,\, j \in S\}$ the incoming edges to $S$. Let $M_t$ be the vehicle’s mass after dropping $m_t$ at location $t \in T$, and $M$ its unladen mass (i.e., without packages). Set $\overline{M} = \sum_{t \in T} m_t$ as the total package mass, so that the initial mass at the depot is $M_D = M + \overline{M}$. Finally, $x_{ij}$ is a binary variable indicating whether edge $(i, j)$ is traversed ($x_{ij} = 1$) or not ($x_{ij} = 0$).\footnotemark
\footnotetext{Throughout this paper, we use $x_{ij}$ and $x_e$ interchangeably to represent the binary variable for edge $e = (i, j)$.}


The objective is to minimize the total energy expenditure, expressed as $\min \sum_{e \in E} C_e x_e$. Since minimizing energy is equivalent to minimizing distance, we set $C_e = \alpha M_e d_e$, so the objective becomes
\begin{align} \label{eq: modified_objective_function}
    \min \sum_{(i, j) \in E} \alpha M_i d_{ij} x_{ij}.
\end{align}
There are two main sets of constraints. First, degree constraints ensure the depot and each target have exactly one incoming and one outgoing edge, given by
\begin{align}
    \sum_{e \in \delta^+({t})} x_e &= 1, \quad \forall t \in T \cup \{D\}, \label{eq: TSP_constraint_1} \\
    \sum_{e \in \delta^-({t})} x_e &= 1, \quad \forall t \in T \cup \{D\}. \label{eq: TSP_constraint_2}
\end{align}
The second set of constraints ensures that the desired package with mass $m_t$ is dropped at each location $t$. Specifically,
\begin{align} \label{eq: mass_dropping_constraint}
    |M_i - M_j - m_j x_{ij}| \leq \overline{M} (1 - x_{ij}), \, \forall i \in T \cup \{D\},\, j \in T.
\end{align}
This constraint relates the vehicle’s mass before and after visiting location $j$ via edge $(i, j)$. If $x_{ij} = 1$, then $M_i - M_j = m_j$, indicating that a package of mass $m_j$ is dropped at $j$. A depiction of the same is shown in Fig.~\ref{fig: Mij_relation}. For $x_{ij} = 0$, the mass difference is upper-bounded by $\overline{M}$ for any route. Note that $j = D$ is excluded since no package is dropped at the depot.

\begin{figure}[htb!]
    \centering
    \includegraphics[width=0.5\linewidth]{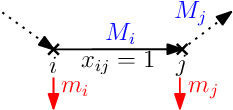}
    \caption{Depiction of relation between $M_i$ and $M_j$}
    \label{fig: Mij_relation}
\end{figure}




Note that~\eqref{eq: mass_dropping_constraint} also eliminates subtours, as it resembles the Miller-Tucker-Zemlin subtour elimination constraints for TSP~\cite{MTZ}.

Finally, the decision variables and their bounds are:
\begin{align} \label{eq: decision_variables}
\begin{split}
    x_{ij} &\in \{0, 1\}, \quad \forall (i, j) \in E, \\
    M_i &\in \left[M, M + \overline{M} \right], \quad \forall i \in T, \\
    M_D &= M + \overline{M}.
\end{split}
\end{align}
This formulation is a MINLP due to bilinear terms in the objective in \eqref{eq: modified_objective_function}. In the following section, we present a relaxation to obtain an MILP.

\section{Mixed Integer Linear Program for LD-TSP}

To relax the original MINLP, we replace the decision variables $M_i$ for $i \in T \cup \{D\}$ with auxiliary variables defined on each edge $(i, j) \in E$:
\begin{align} \label{eq: auxilliary_variables}
    \zeta_{ij} = M_i x_{ij}, \quad \eta_{ij} = M_j x_{ij}.
\end{align}
With these variables, the objective in~\eqref{eq: modified_objective_function} becomes
\begin{align} \label{eq: modified_objective_function_vars}
    \min \sum_{(i, j) \in E} \alpha \zeta_{ij} d_{ij}.
\end{align}
From~\eqref{eq: auxilliary_variables}, it follows that
\begin{align} \label{eq: auxilliary_variables_depot}
    \zeta_{Dj} = M_D x_{Dj}, \quad \eta_{jD} = M x_{jD}, \quad \forall j \in T,
\end{align}
since the vehicle leaves the depot with mass $M_D$ (laden) and returns with mass $M$ (unladen).


We now derive constraints for $\zeta_{ij}$ and $\eta_{ij}$. Multiplying~\eqref{eq: mass_dropping_constraint} by $x_{ij}$ and using the fact that $x_{ij}^2 = x_{ij}$ (since $x_{ij}$ is binary), we obtain
\begin{align} \label{eq: zeta_eta_relation}
    \zeta_{ij} - \eta_{ij} - m_j x_{ij} = 0, \quad \forall i \in T \cup \{D\},\, j \in T,
\end{align}
where $\overline{M} x_{ij} (1 - x_{ij})$ reduces to zero. This relation connects $\zeta_{ij}$ and $\eta_{ij}$ on each edge.  
Additionally, to relate $\zeta$ and $\eta$ at a common target $j$ (see Fig.~\ref{fig: zeta_eta_relation}), note that $\eta_{ij} = M_j x_{ij}$ and $\zeta_{jk} = M_j x_{jk}$; thus,
\begin{align} \label{eq: zeta_eta_relation_2}
    \sum_{i \in \delta^{-}(j)} \eta_{ij} = \sum_{k \in \delta^{+}(j)} \zeta_{jk}, \quad \forall j \in T,
\end{align}
since both sides equal $M_j$, by~\eqref{eq: TSP_constraint_1} and~\eqref{eq: TSP_constraint_2}. The sums are taken over all incoming and outgoing edges, respectively, as the target from which we arrive at $j$ (i.e., $i$), and that target to which we depart from $j$ (i.e., $k$) is not known.
\begin{figure}[htb!]
    \centering
    \includegraphics[width=0.7\linewidth]{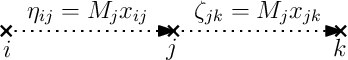}
    \caption{Depiction of $\eta_{ij}$ and $\zeta_{jk}$ to derive their relationship}
    \label{fig: zeta_eta_relation}
\end{figure}

Additionally, noting that the mass of the vehicle on each edge ($M_i$) can be bounded by
\begin{align} \label{eq: bounds_Mi}
    0 \leq M_i - M \leq \overline{M},
\end{align}
and noting that 
\begin{align}
    0 \leq x_{ij} \leq 1, \quad \forall (i, j) \in E,
\end{align}
the following inequalities can be obtained for every $(i, j) \in E$:
\begin{align}
    &\left(M_i - M \right) x_{ij} \geq 0, \label{eq: intermediary_eq_1} \\
    &\left(M + \overline{M} - M_i \right) x_{ij} \geq 0, \label{eq: intermediary_eq_2} \\
    &\left(M_i - M \right) \left(1 - x_{ij} \right) \geq 0, \label{eq: intermediary_eq_3} \\
    &\left(M + \overline{M} - M_i \right) \left(1 - x_{ij} \right) \geq 0. \label{eq: intermediary_eq_4}
\end{align}
We aim to rewrite these equations using the definition of $\zeta_{ij}$ from~\eqref{eq: auxilliary_variables}. Since we do not explicitly track the vehicle’s sequence, $M_i$ must be eliminated in some equations. This can be done by substituting the definition of $\zeta_{ij}$ and using the degree constraints in~\eqref{eq: TSP_constraint_1} as follows:
\begin{align} \label{eq: eliminate_Mi}
    \sum_{j \in \delta^+(\{i\})} \zeta_{ij} = \sum_{j \in \delta^+(\{i\})} M_i x_{ij} = M_i.
\end{align}

Consider~\eqref{eq: intermediary_eq_1} and~\eqref{eq: intermediary_eq_2}; these can be rewritten as\footnotemark\footnotetext{When $x_{ij} = 0,$ i.e., the edge is not traversed, $\zeta_{ij}$ is set to zero with the constraints in \eqref{eq: auxilliary_var_constraint_1}. Therefore, the linear relaxation is tight.}
\begin{align} \label{eq: auxilliary_var_constraint_1}
    M x_{ij} \leq \zeta_{ij} \leq \left(M + \overline{M} \right) x_{ij}, \quad \forall (i, j) \in E.
\end{align}
Consider \eqref{eq: intermediary_eq_3}; this equation can be rewritten using the definition of $\zeta_{ij}$ in \eqref{eq: auxilliary_variables} and eliminating $M_i$ using \eqref{eq: eliminate_Mi} as
\begin{align} \label{eq: zeta_var_constraint_2}
    \zeta_{ij} \leq \sum_{j \in \delta^+(\{i\})} \zeta_{ij} - M + M x_{ij}, \quad \forall (i, j) \in E.
\end{align}
Similarly, \eqref{eq: intermediary_eq_4} can be rewritten as
\begin{align} \label{eq: zeta_var_constraint_3}
    \zeta_{ij} \geq \sum_{j \in \delta^+(\{i\})} \zeta_{ij} + \left(M + \overline{M} \right) \left(x_{ij} - 1 \right), \quad \forall (i, j) \in E.
\end{align}

Similarly, for $\eta_{ij}$, we use the inequality $0 \leq M_j - M \leq \overline{M}$, instead of~\eqref{eq: bounds_Mi}, to derive analogous constraints to~\eqref{eq: intermediary_eq_1}--\eqref{eq: intermediary_eq_4}. The resulting equations for $\eta_{ij}$ are:
\begin{align}
    &M x_{ij} \leq \eta_{ij} \leq \left(M + \overline{M} \right) x_{ij}, \quad \forall (i, j) \in E, \label{eq: eta_var_constraint_1} \\
    &\eta_{ij} \leq \sum_{i \in \delta^+(\{j\})} \eta_{ij} - M + M x_{ij}, \quad \forall (i, j) \in E, \label{eq: eta_var_constraint_2} \\
    &\eta_{ij} \geq \sum_{i \in \delta^+(\{j\})} \eta_{ij} + \left(M + \overline{M} \right) \left(x_{ij} - 1 \right), \quad \forall (i, j) \in E. \label{eq: auxilliary_variables_final_constraint}
\end{align}

\textbf{Remark:} The constraints in~\eqref{eq: zeta_var_constraint_2}, \eqref{eq: zeta_var_constraint_3}, \eqref{eq: eta_var_constraint_2}, and \eqref{eq: auxilliary_variables_final_constraint} can be omitted from the MILP, as they do not provide meaningful restrictions for $x_{ij} = 0$ or $1$. In fact, we consider an alternative MILP formulation including these constraints, and observe from computational experiments that their inclusion degrades algorithm performance. 

\subsection{Summary of MILP formulation}

Our final formulation consists of the objective in~\eqref{eq: modified_objective_function_vars} and the constraints~\eqref{eq: TSP_constraint_1}, \eqref{eq: TSP_constraint_2}, \eqref{eq: auxilliary_variables_depot}--\eqref{eq: zeta_eta_relation_2}, \eqref{eq: auxilliary_var_constraint_1}, and \eqref{eq: eta_var_constraint_1}. Since $\zeta_{ij}$ and $\eta_{ij}$ are continuous decision variables and $x_{ij}$ are binary, the resulting program is an MILP.

\section{Alternate MILP formulations and baselines}

We benchmark our formulation against three alternative baseline formulations and an additional algorithm. The first baseline is identical to the previously defined MILP, but additionally includes the constraints~\eqref{eq: zeta_var_constraint_2}, \eqref{eq: zeta_var_constraint_3}, \eqref{eq: eta_var_constraint_2}, and \eqref{eq: auxilliary_variables_final_constraint} to assess their effect on the lower bound. The second baseline augments this baseline MILP with subtour elimination constraints from the Dantzig-Fulkerson-Johnson (DFJ) formulation for the TSP~\cite{TSP_book_applegate}:
\begin{align}
    \sum_{e \in \delta^+(S)} x_e \geq 1, \quad \forall S \subset T \cup \{D\}, \, |S| \geq 1. \label{eq: TSP_constraint_3}
\end{align}
Due to the exponential number of such constraints, we implement them as lazy cuts: violated constraints are identified using a separation algorithm and added to the solver. The separation algorithm is based on a flow constraint~\cite{3_approx_algo_2_vehicle_TSP}, requiring that the flow from the depot to each target be at least one. For any set $S$ containing the depot $D$, this is
\begin{align*}
    \sum_{(i, j) \in \delta^+ (S)} x_{ij} \geq 1, \quad 1 \leq |S| < |T \cup \{D\}|, \quad D \in S.
\end{align*}
Since the possible sets $S$ grow exponentially, we search for violated sets using the max-flow min-cut theorem after each relaxed solution, and add only these constraints to the solver.


Our third baseline is the previously presented MINLP formulation for the LD-TSP, with the objective in~\eqref{eq: modified_objective_function} and constraints~\eqref{eq: TSP_constraint_1}--\eqref{eq: mass_dropping_constraint}. Here, $x_{ij}$ are binary and $M_i$ are continuous decision variables, with bounds detailed in~\eqref{eq: decision_variables}. 

Additionally, we benchmark against an $A^*$-based tree search algorithm from~\cite{load_sequence_risk} for the hazmat TSP. 

\section{Results} \label{sect: results}

The MILP formulation, its variants, and the MINLP were implemented in Julia~1.11 using the JuMP package on a laptop with an AMD Ryzen 9 5900HS CPU (3.30~GHz, 16~GB RAM). MILPs without DFJ subtour cuts and the MINLP were solved via branch-and-bound, while the MILP with DFJ cuts employed branch-and-cut; all utilized the commercial solver Gurobi~12.0.2~\cite{gurobi}. Each instance was warm-started with two visitation sequences, generated by solving the symmetric TSP (with Euclidean distances) using LKH~\cite{LKH_TSP_solver}, a state-of-the-art heuristic.\footnotemark\, The sequence yielding the lower LD-TSP cost was used for initialization.
\footnotetext{Although the two TSP tours, such as $D-a-b-c-d-e-D$ and $D-e-d-c-b-a-D$, are equivalent for TSP, they yield different costs for LD-TSP; hence, we consider both sequences.}

We consider nine instances with node counts ranging from $11$ to $70$. The 11-node instance is taken from~\cite{MD_algorithm}, referred to as MM1 or MS1\footnotemark, while the remaining instances are standard TSP cases from~\cite{TSP_ref_1} and~\cite{TSP_ref_2}. For all instances, the last node is designated as the vehicle depot by default.
\footnotetext{In~\cite{wang}, the MM1 instance involves 10 targets and three vehicles departing from different locations. For our problem, we only use one depot and ignore the second and third departure points. Detailed coordinates for targets and depots are given in~\cite{wang}.}

Without loss of generality, we set the maximum package mass to $1$ unit, making the unladen UAV mass $M$ the key parameter. We vary the unladen mass factor $\gamma \in \{2, 5, 10\}$, where $\gamma := M/\overline{M}$. Package masses at each location are randomly selected from $\{0.1, 0.2, \ldots, 0.9, 1.0\}$. Additionally, we set $\alpha = 0.1$ without loss of generality.

We report solutions obtained for solver time limits of 1, 5, and 10 minutes. In the following subsections, we analyze factors influencing the results, including the impact of unladen mass on the optimal solution and differences compared to the standard TSP. Additionally, we benchmark our initial MILP formulation against three baseline formulations. Moreover, we compare our MILP results to the solutions against the algorithm from~\cite{load_sequence_risk} for the hazmat TSP. 

\textbf{Remark:} 
We report solutions for three solver time limits, reflecting practical scenarios where users may require quick, high-quality results within a limited timeframe.

\textbf{Remark:} 
Without loss of generality, we set the maximum package mass to $1$ unit; for other instances, package and vehicle masses can be scaled accordingly. Additionally, we fix $\alpha = 0.1$, as it affects only the objective value and not the solution itself.

\subsection{Optimality gap for various instances and computation time for unladen factor $\gamma = 10$}

In Fig.~\ref{fig: results_unladen_factor_10}, we present the optimality gap\footnotemark\, for the various instances considered in this paper for $\gamma = 10$. 
Figure~\ref{subfig: all_instances} shows the optimality gap for a $1$-minute time limit. Our formulation finds optimal solutions for all instances except st70, with smaller instances solved in seconds. For st70, we achieve a solution within $3\%$ of optimality in $1$ minute, highlighting the tightness of our approach.
\footnotetext{The optimality gap indicates how close the best feasible solution is to the optimum, relative to the best lower bound from the linear relaxation. Smaller gaps mean solutions closer to optimality.}

\begin{figure}[htb!]
    \centering
    \subfloat[Optimality gap and computation time for all instances]{\includegraphics[width = 0.7\linewidth]{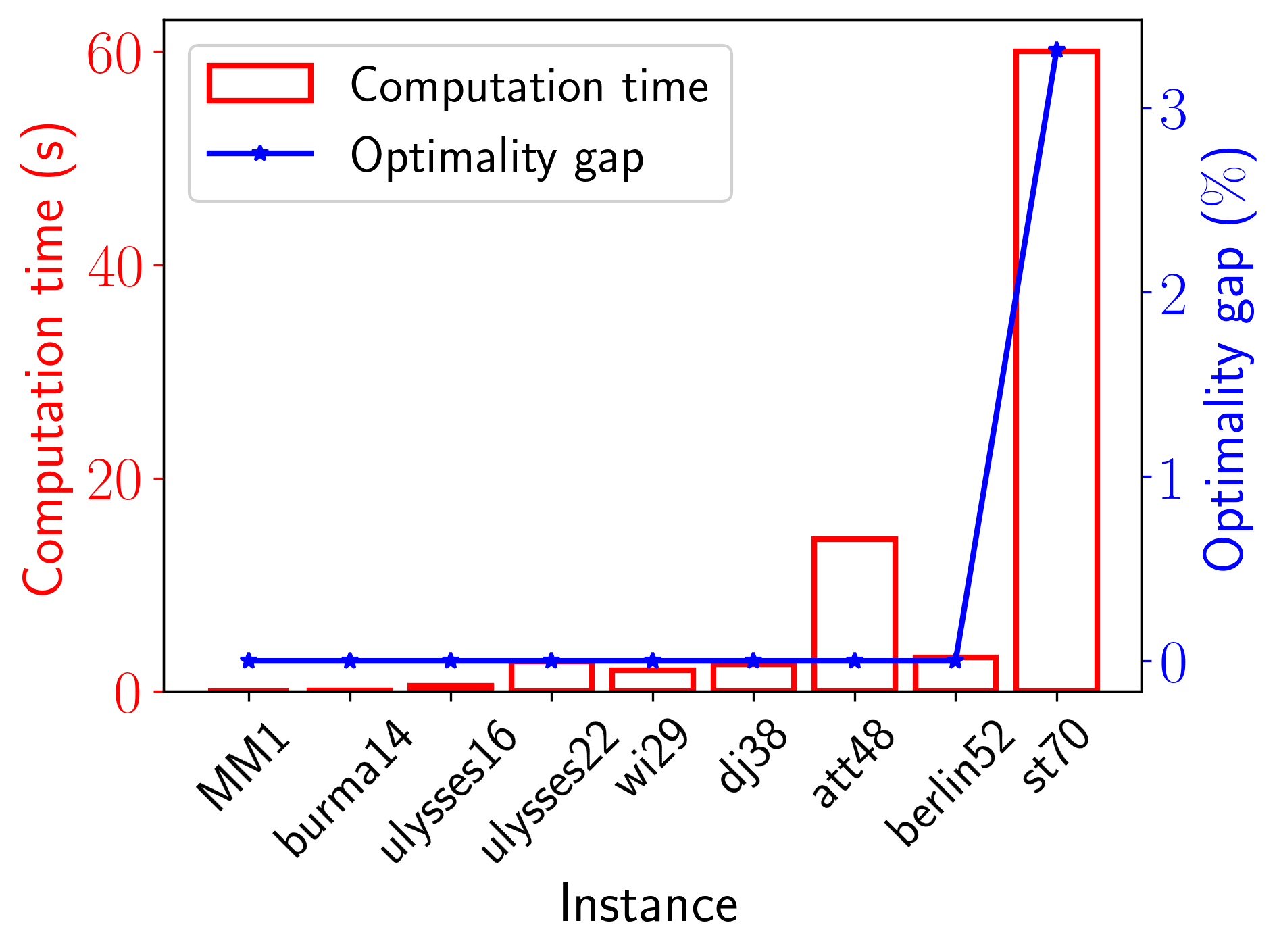}
    \label{subfig: all_instances}} \hfil
    \subfloat[st70 for varying time limits]{\includegraphics[width = 0.7\linewidth]{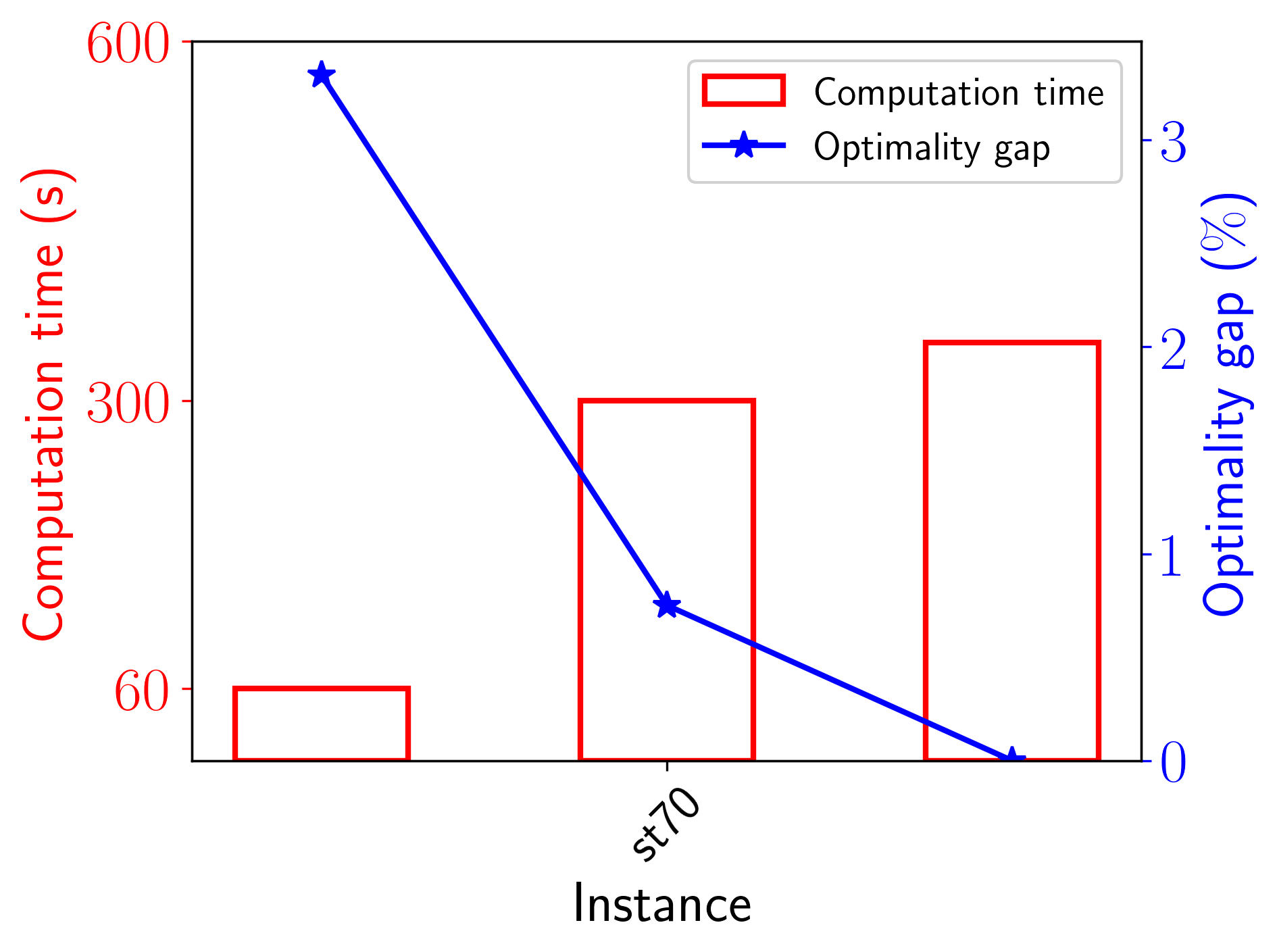}
    \label{subfig: st70}}
    \caption{Summary of optimality gap and computation time for an unladen mass ratio of $10$ for the vehicle}
    \label{fig: results_unladen_factor_10}
\end{figure}

We also show the optimality gap for st70 with solver time limits of $5$ and $10$ minutes in Fig.~\ref{subfig: st70}. The gap decreases to about $1\%$ after $5$ minutes and is closed completely with optimality achieved in around $6$ minutes. Thus, longer computation times allow users to obtain tighter optimality gaps in practice, as expected.

\begin{figure*}[htb!]
    \centering
    \subfloat[Optimality gap and computation time  first three instances]{\includegraphics[width = 0.65\linewidth]{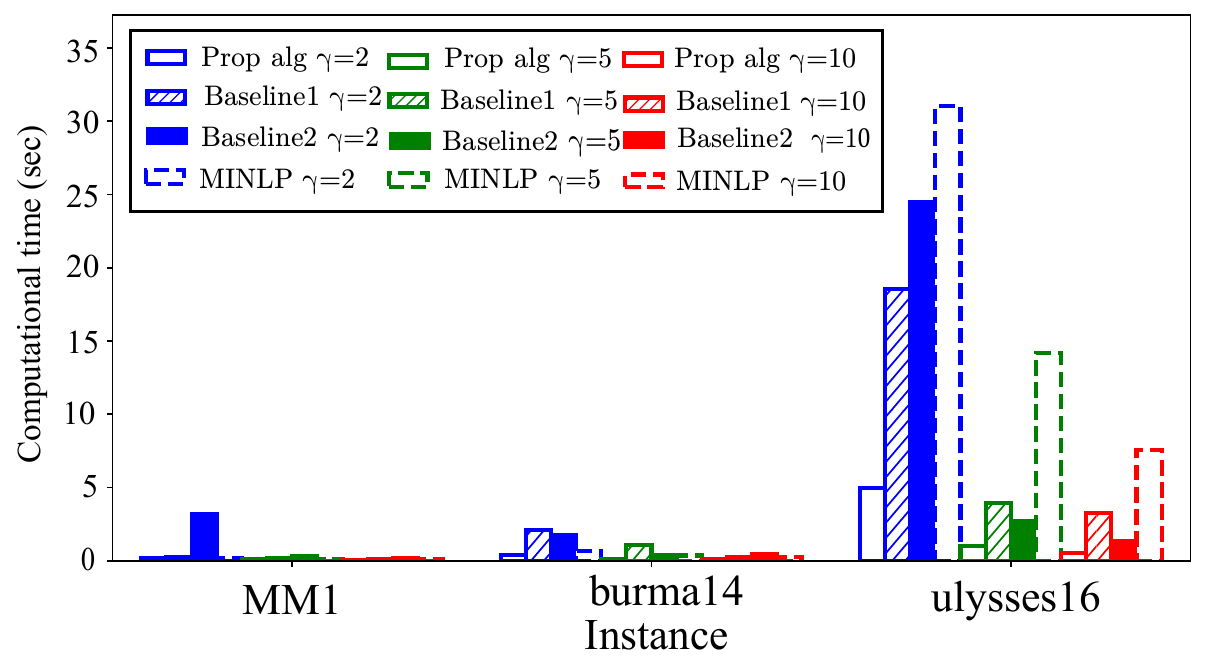}}
    \label{fig:image1}
    \subfloat[Optimality gap and computation time for other six instances]{\includegraphics[width = 0.8\linewidth]{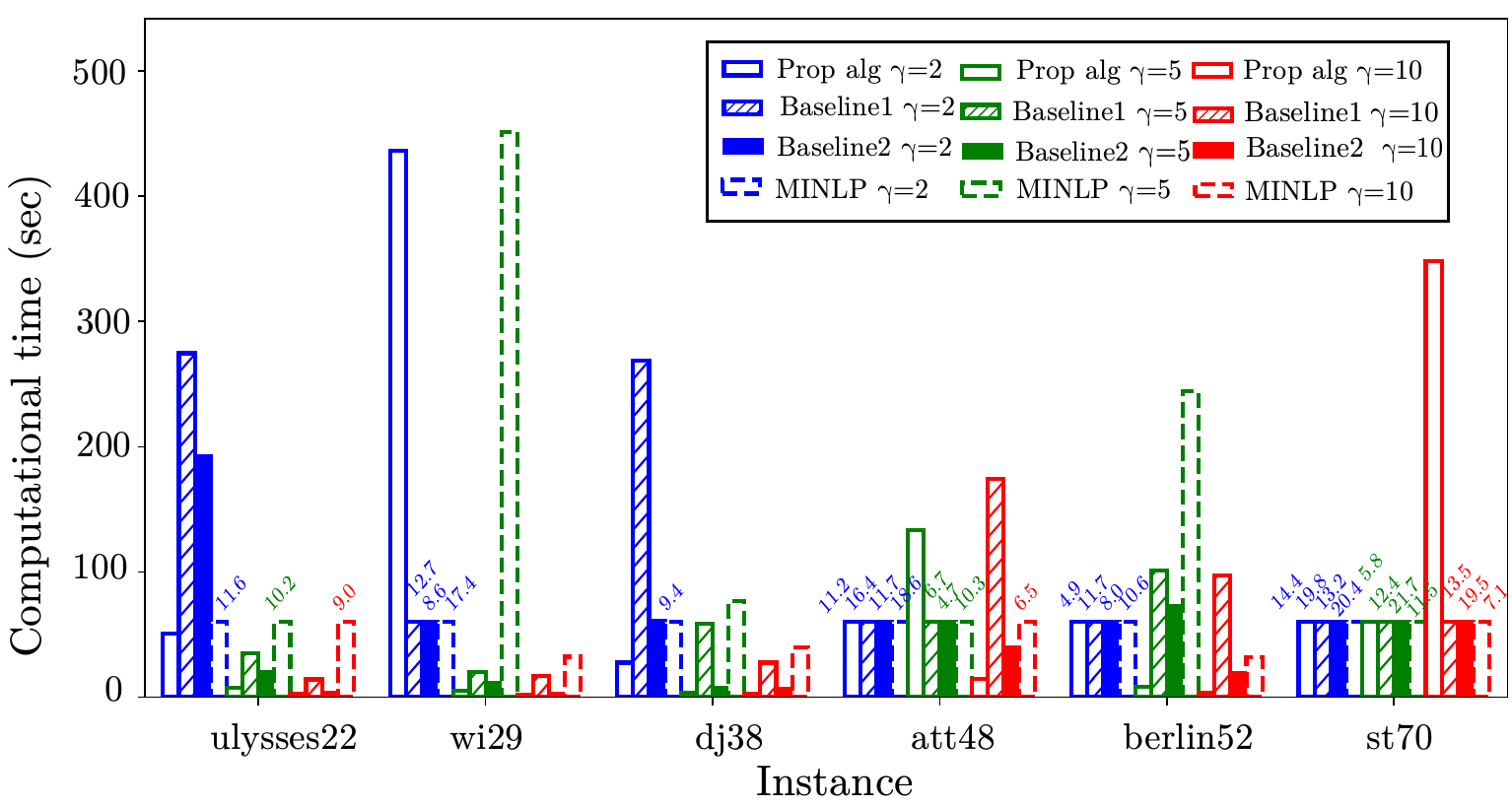}
    \label{fig:image2}}
    \caption{Computation time with varying unladen mass factor $\gamma,$ and optimality gap (mentioned above the bar graph) when optimal solution wasn't obtained within 10 minutes for proposed formulation and three benchmarking formulations. Refer to ``Alternate MILP formulations" section for the definitions of Baselines 1 and 2.}
    \label{fig: computation_time_changing}
\end{figure*}

\subsection{Impact of changing unladen vehicle mass for same packages}

We vary the unladen mass factor $\gamma$ from $2$ to $5$ to $10$ for the proposed MILP and three baseline formulations. Figure~\ref{fig: computation_time_changing} illustrates the computation times for different $\gamma$. Across all instances and methods, higher values of $\gamma$ lead to reduced computation times for finding the optimal solution. For instances such as st70, where optimality could not be reached within the time limit, a larger $\gamma$ corresponds to a smaller optimality gap at termination. This trend likely arises because, as $\gamma$ increases, the cost term, dominated by the unladen mass, lessens the impact of route sequencing. In Fig.~\ref{fig: computation_time_changing}, we also report the optimality gaps for instances not solved to optimality after $10$ minutes. In such cases, we report the solution obtained with a time limit of $1$ minute.

Figure~\ref{fig: computation_time_changing} shows that our proposed MILP consistently outperforms all three baselines in computation time and/or optimality gap across all instances. Notably, for cases like wi29 where the baselines fail to reach optimality within $10$ minutes (for $\gamma = 2$), our MILP finds the optimal solution in under $500$ seconds. This indicates that including additional constraints and/or DFJ subtour cuts reduces algorithm efficiency. Among the baselines, the MINLP performs worst, highlighting the effectiveness of our MILP relaxation.


Beyond computation time, we observe that the optimal tour can also vary with $\gamma$. Figure~\ref{fig: MM1_mass_ratio} shows the optimal tour for the MM1 instance with ten targets, where package masses to be dropped are fixed but the visitation sequence changes as $\gamma$ varies. This example highlights how two vehicles with different unladen masses may follow different delivery routes, even when transporting identical packages.

\begin{figure*}[htb!]
    \centering
    \subfloat[Unladen mass ratio of $2$]{\includegraphics[width = 0.32\textwidth]{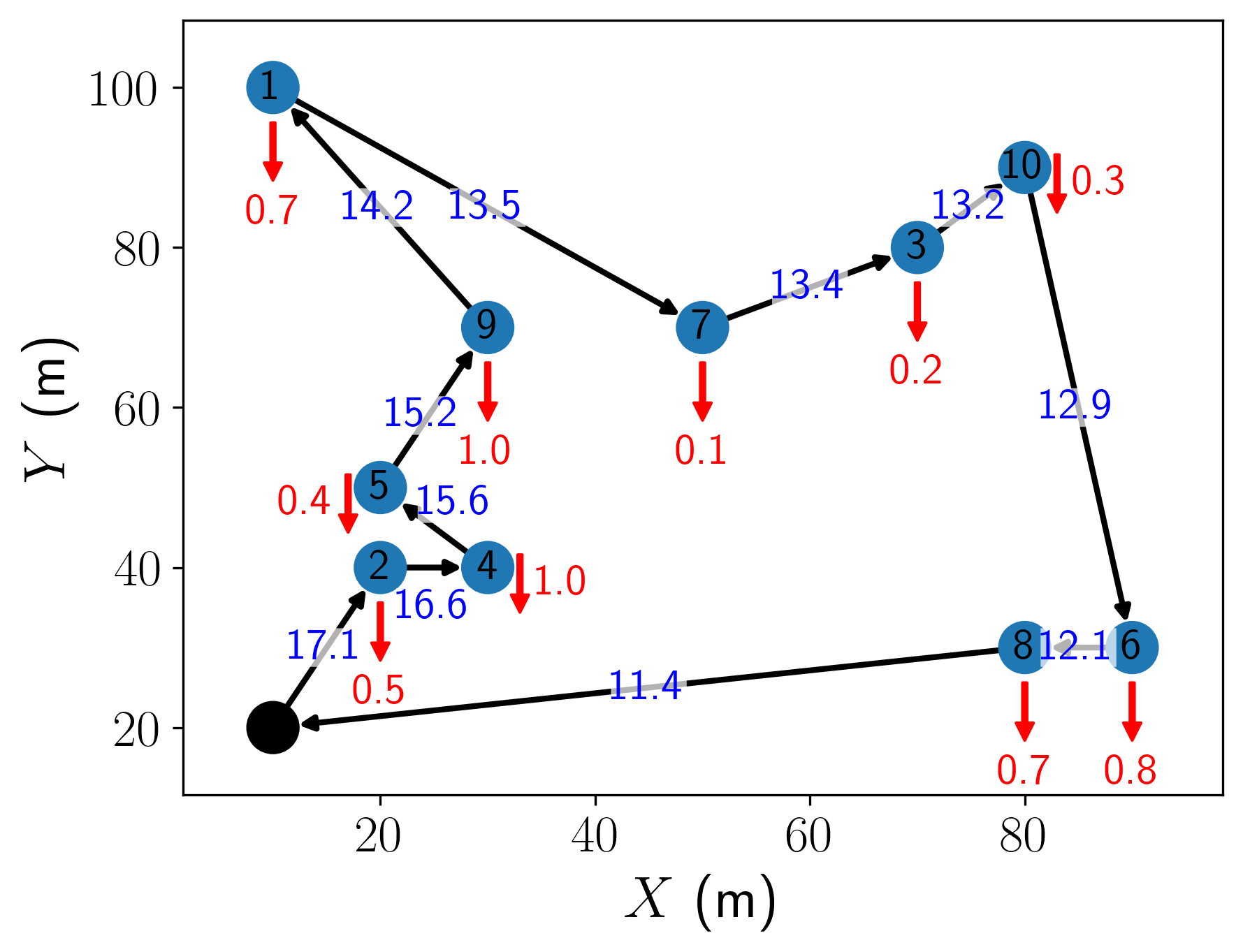}
    \label{subfig: MM1_factor_2}}
    \hfil
    \subfloat[Unladen mass ratio of $5$]{\includegraphics[width = 0.32\textwidth]{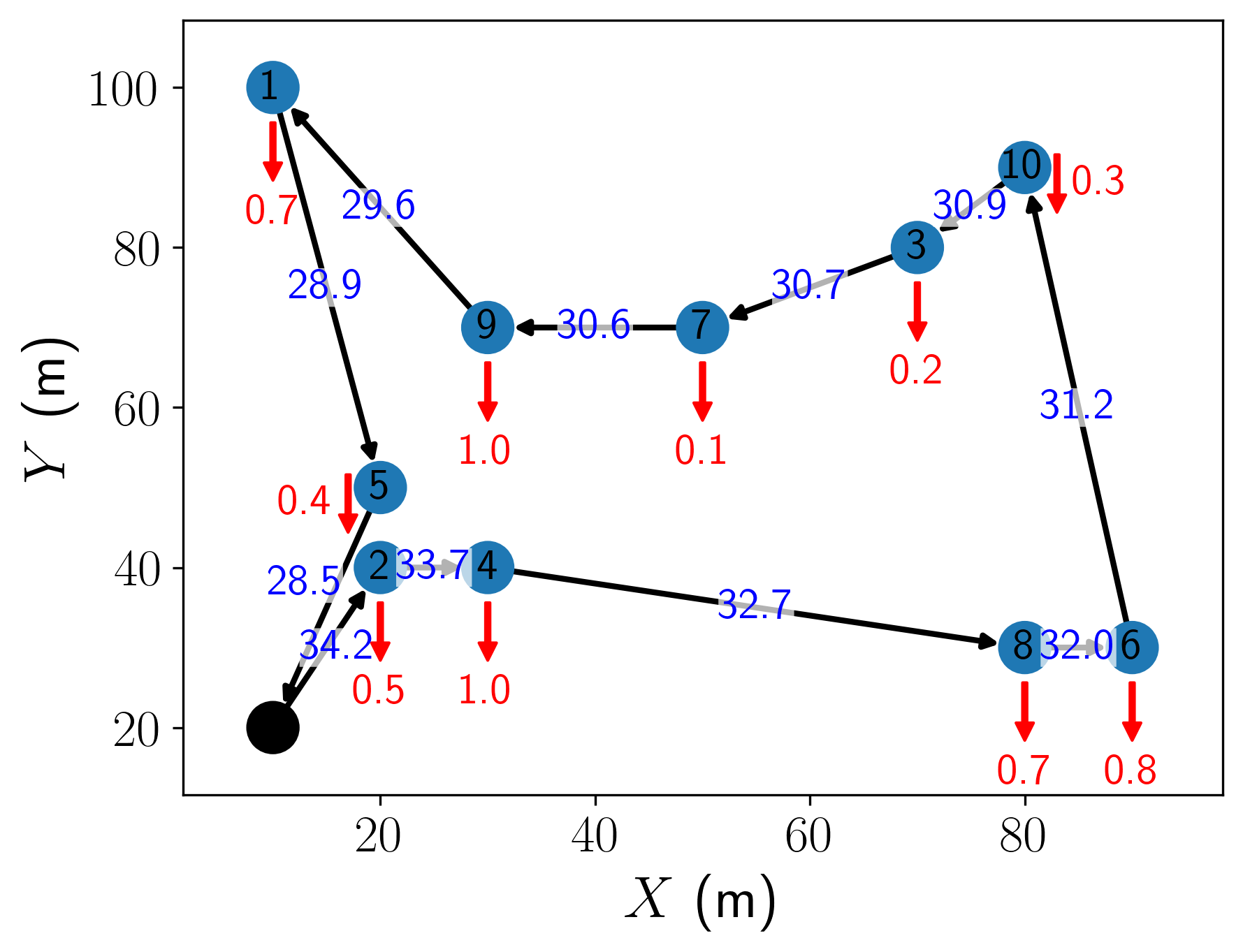}
    \label{subfig: MM1_factor_5}}
    \hfil
    \subfloat[Unladen mass ratio of $10$]{\includegraphics[width = 0.32\textwidth]{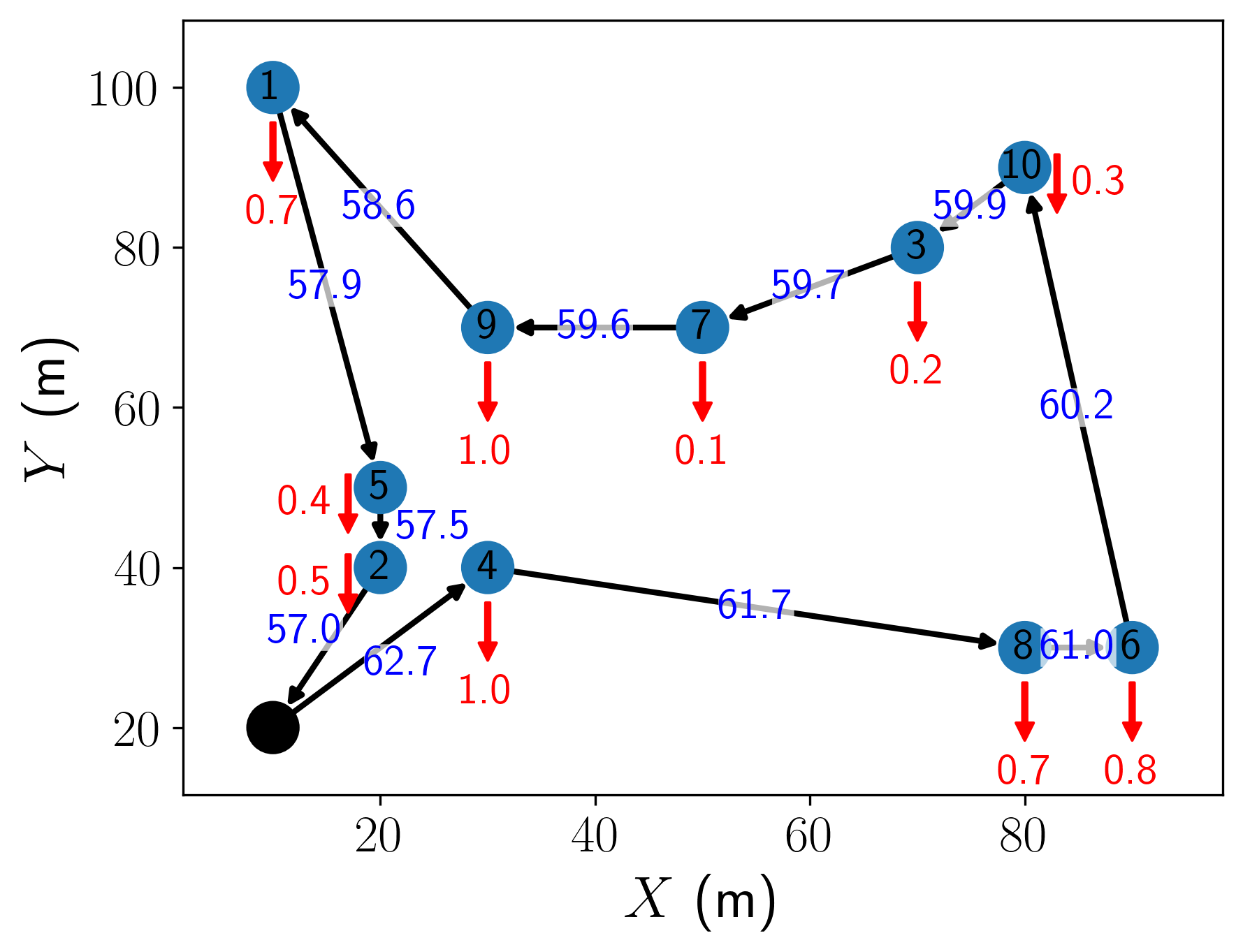}
    \label{subfig: MM1_factor_10}}
    \caption{Effect on unladen mass ratio on optimal path for instance MM1}
    \label{fig: MM1_mass_ratio}
\end{figure*}

\subsection{Differences in optimal TSP solution and optimal LD-TSP solution}

A simple approach to obtain a feasible solution is to solve the regular symmetric TSP and use its sequence to construct an LD-TSP solution; we, in fact, use this technique to warm start our solver. However, this approach does not always yield the optimal LD-TSP solution. For instance, using the TSP-derived sequence for the ulysses22 instance produces a solution with a cost $257.40$ (Fig.~\ref{subfig: optimal_tsp_tour}), while the optimal LD-TSP tour has a cost of $246.15$ (Fig.~\ref{subfig: optimal_tour}). Notably, the tour structures differ. In the optimal solution, targets 12 and 13 are visited early, whereas in the TSP sequence, they are visited later.

\begin{figure}[htb!]
    \centering
    \subfloat[Optimal TSP tour utilized for LD-TSP problem (cost = $257.40$)]{\includegraphics[width = 0.45\textwidth]{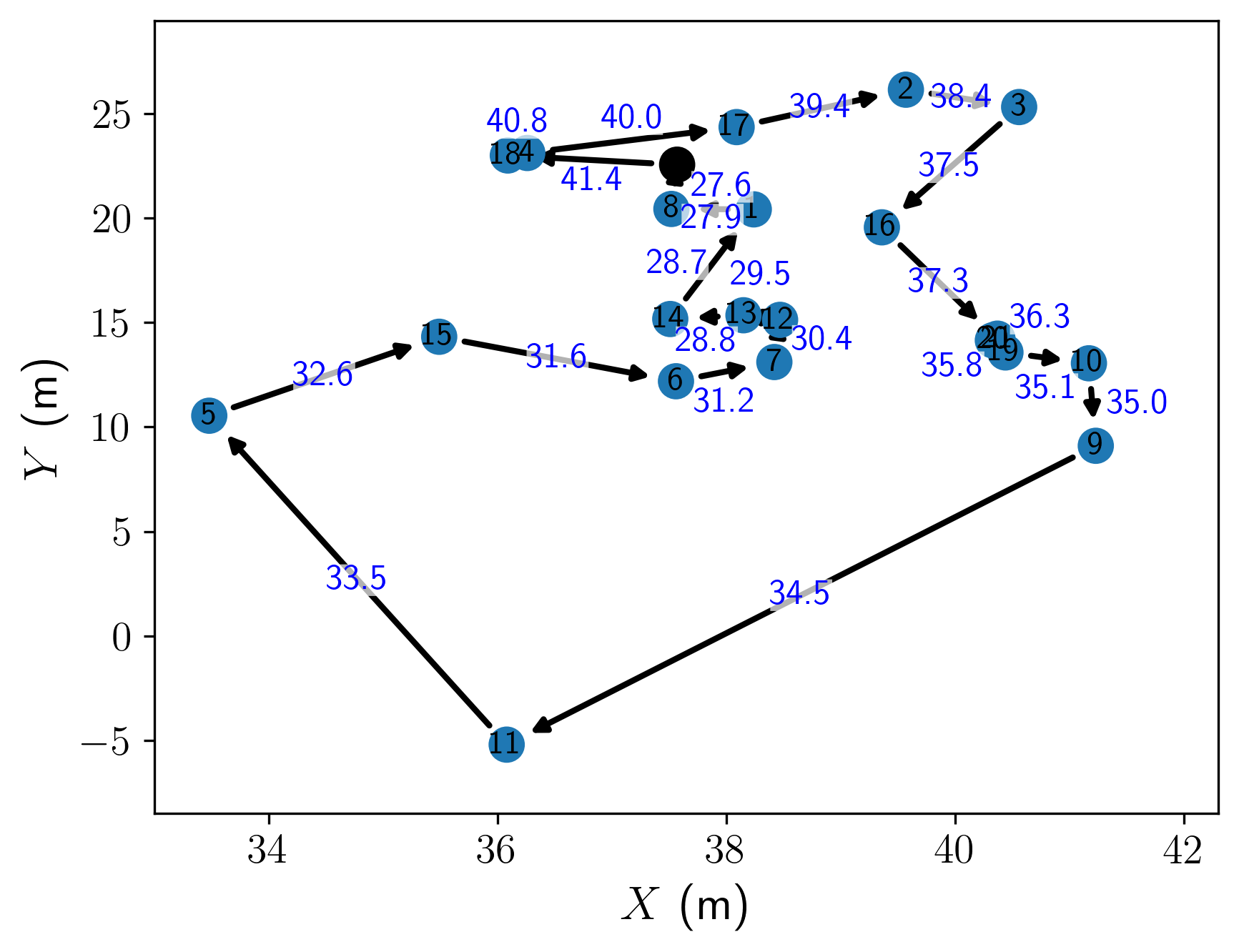}
    \label{subfig: optimal_tsp_tour}}
    \hfil
    \subfloat[Optimal tour obtained from proposed formulation (cost = $246.15$)]{\includegraphics[width = 0.45\textwidth]{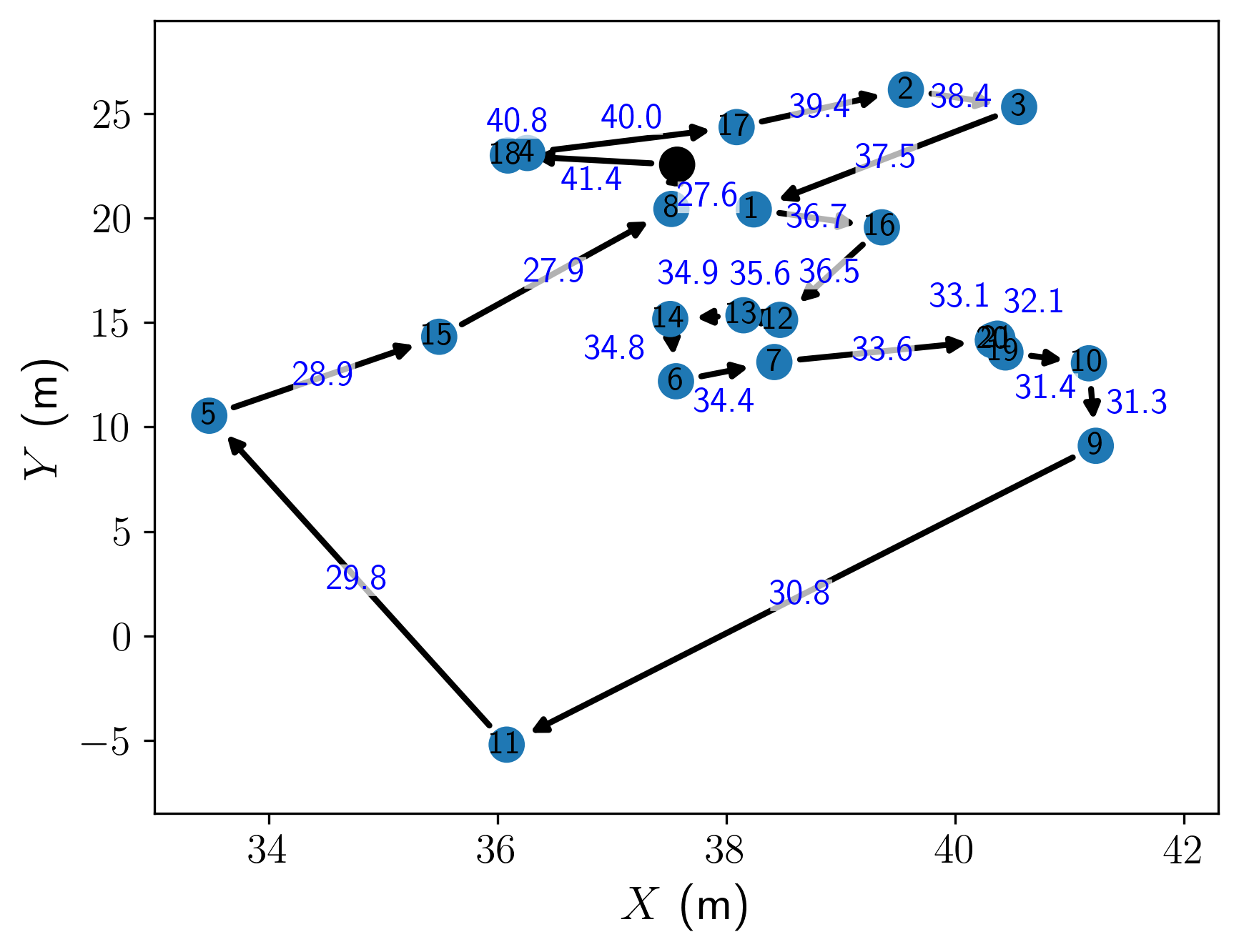}
    \label{subfig: optimal_tour}}
    \caption{Optimal solution vs optimal TSP solution for ulysses22 with $\gamma = 10$. In both figures, the depot location is shown with a shaded black node.}
    \label{fig: load_tsp_vs_tsp}
\end{figure}

Nevertheless, the solution derived from the TSP is fairly close to the optimal LD-TSP solution, as shown in Fig.~\ref{fig: difference_LKH_optimal}. The cost difference is at most $5\%$. Therefore, users needing a quick, feasible solution can generate two target sequences using LKH~\cite{LKH_TSP_solver} and select the one with the lower LD-TSP cost.

\begin{figure}[htb!]
	\centering
	\includegraphics[width=0.8\linewidth]{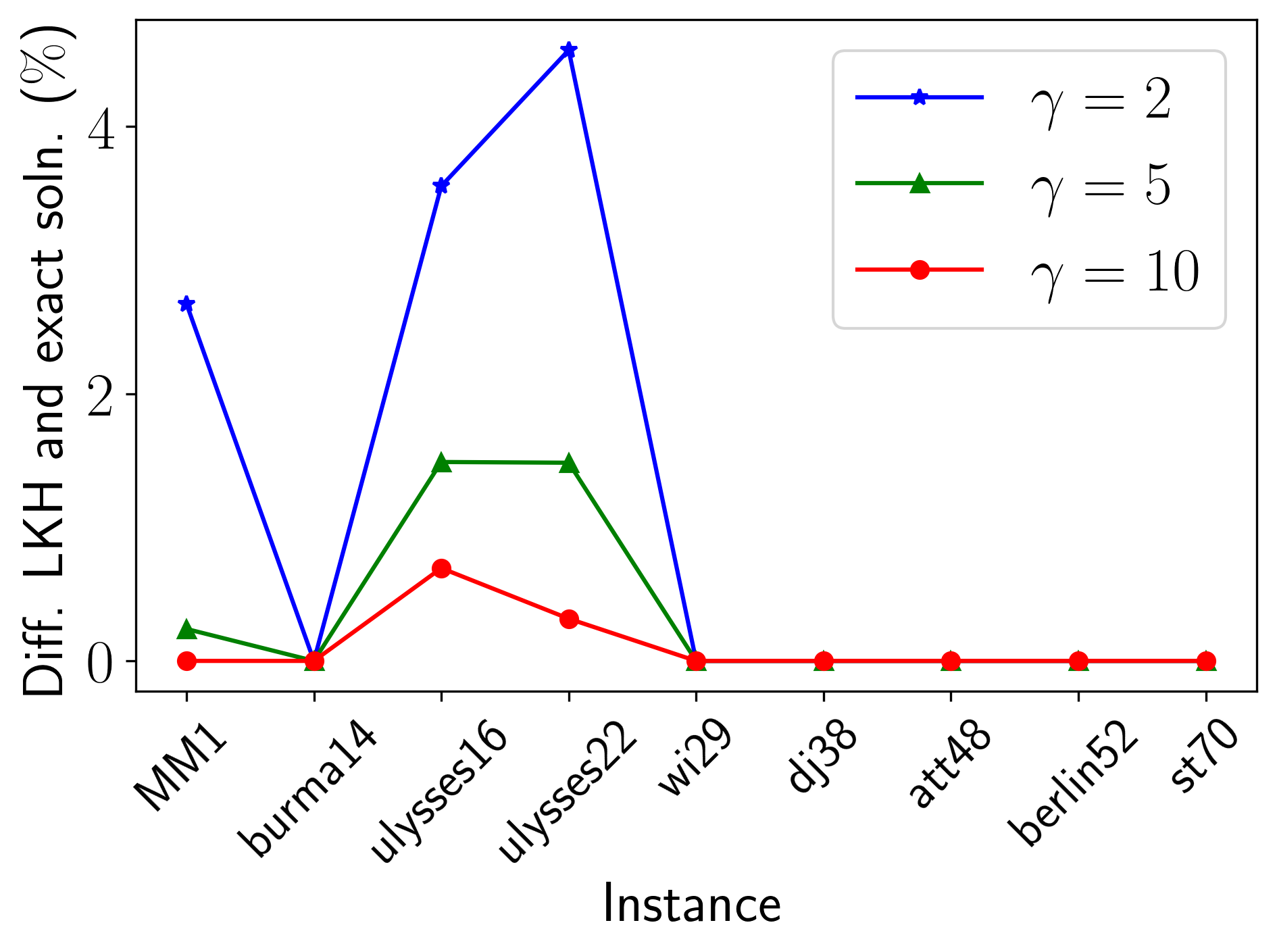}
    \caption{Percentage difference between LKH solution and optimal solution/best incumbent solution, defined by $\frac{\text{LKH} - \text{optimal/incumbent}}{\text{optimal/incumbent}} \times 100$}
    \label{fig: difference_LKH_optimal}
\end{figure}


\subsection{Benchmarking against state-of-the-art}

We benchmark our formulation against the hazmat TSP from~\cite{load_sequence_risk}. The authors considered three standard vehicle routing problem instances, sourced from~\cite{VRP_instances}, along with smaller subsets. For each edge, risk values were sampled from $[0, 100]$. As the authors' datasets are unavailable, we use the same three standard instances and generate three variations for each. To recover the hazmat TSP from our LD-TSP, we set $\gamma = 0$ in our MILP. We then compare the optimality gap of our MILP with the results reported in~\cite{load_sequence_risk}, using the same computation time limit of one hour. We remark here that the hardware specifications of the machine used in \cite{load_sequence_risk} is similar to our hardware.

Our results are shown in Table~\ref{tab: hazmat_TSP}. In two out of three instances, we achieve a substantially smaller optimality gap, reducing it from nearly $90\%$ to about $50\%$. For the third instance, our gap is comparable to that reported in~\cite{load_sequence_risk}. Thus, our LD-TSP formulation generalizes effectively to the hazmat TSP.

For the same instances, we also consider $\gamma = 2$ to evaluate the performance of our MILP on the LD-TSP. 
The optimal solution was found in at most about 6 minutes for all cases, as shown in Table~\ref{tab: hazmat_TSP}. This further demonstrates the tightness of our formulation for the LD-TSP.

\begin{table*}[htb!]
\centering
\caption{Benchmarking on hazmat TSP instances and the algorithm proposed in \cite{load_sequence_risk}}
\label{tab: hazmat_TSP}
\begin{tabular}{|c|c|c|c|}
\hline
Instance & \begin{tabular}[c]{@{}c@{}}Benchmark gap ($\%$)\end{tabular} & \begin{tabular}[c]{@{}c@{}}MILP gap for $\gamma = 0$ ($\%$)\end{tabular} & \begin{tabular}[c]{@{}c@{}}Compute time for $\gamma = 2$ (s)\end{tabular} \\ \hline
A-n32-k5 & 88.6 & [48.4, 53.2, 54.9] & [19.55, 41.12, 45.13] \\ \hline
B-n38-k6 & 89.8 & [51.5, 47.9, 54.1] & [313.67, 89.98, 379.01] \\ \hline
P-n21-k2 & 28.2 & [32.4, 29.2, 34.9] & [4.13, 2.79, 8.69] \\ \hline
\end{tabular}
\end{table*}

\section{Conclusion}

In this article, we present a novel mixed-integer linear programming (MILP) formulation for the load-dependent traveling salesman problem (LD-TSP), where energy consumption depends on the vehicle’s current load. The considered problem has extensive applications, such as package delivery using cargo drones. A key challenge is modeling the energy consumption for travel between locations. We show that minimizing energy is equivalent to minimizing distance, with energy varying linearly with vehicle mass. Based on this, we first develop a mixed-integer nonlinear program for LD-TSP and then relax it to obtain an MILP. Our approach solves instances with up to 50 targets in under one minute and achieves solutions within $13\%$ optimality for instances up to 70 targets in 10 minutes. 
Benchmarking against three baseline formulations and an algorithm from a related problem, our MILP consistently outperforms all baselines in extensive computational experiments.


\section{Acknowledgment}

The authors thank Dr. Srinivas Chippada for useful discussions regarding the equivalence of time-optimal and energy-optimal paths.

\bibliographystyle{IEEEtran}
\bibliography{cite}

\appendix
\subsection{Derivation of power consumption}

We make the following assumptions:  
(i) The vehicle travels at a constant speed $V_0$ and altitude (for UAVs) in a steady wind, which, without loss of generality, blows eastward at constant speed $v_w > 0$ (relative to the ground).  
(ii) The drag force ${\bf F}_d$ depends only on speed and opposes the vehicle's longitudinal motion.  
(iii) The vehicle's inertia is neglected.  
(iv) The wind does not overpower the vehicle, i.e., $|v_w| < V_0$; otherwise, controllability issues arise, and the vehicle may not reach westward locations.
(v) For UAVs, the lift-to-drag ratio is constant; this assumption is not relevant for ground vehicles. 
(vi) For ground vehicles, such as trucks, aerodynamic drag is negligible compared to rolling resistance. 
(vii) The thrust direction remains fixed, so frontal area and drag coefficients do not change.  
Fig.~\ref{fig: model_cargo_drone} illustrates the problem setup.

\begin{figure}[htb!]
    \centering
    \includegraphics[width = 0.5\linewidth]{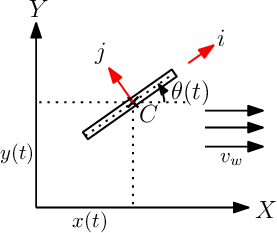}
    \caption{Model of cargo drone}
    \label{fig: model_cargo_drone}
\end{figure}

Let $I, J$ denote the unit vectors along $X$ (east) and $Y$ (north) directions of a ground Cartesian frame. Let $i, j$ be unit vectors along the longitudinal and lateral directions of the drone. Let the coordinate pair $(x, y)$ denote the location of the drone in the ground Cartesian frame and $\theta$ be its heading angle (relative to the wind). One may express the kinematic equations as given in \eqref{eq:kinematics}. 
We remark here that since we consider a vehicle that can turn in place, we do not consider a constraint on the control input $u$ for the turn rate. Nevertheless, the model holds even in the case of vehicles with a curvature constraint.

The constitutive equation for the drag force is:
\begin{align} \label{eq: drag_force}
{\bf F}_d = \frac{1}{2} \rho C_d A_f V_0^2 i = \phi(V_0) i,
\end{align}
where $\rho, C_d,$ and $A_f$ are the density of air, coefficient of drag of the vehicle, and frontal area, respectively.

Note that the velocity of the vehicle may be expressed as 
\begin{align}
{\bf v} (t) = \dot x (t) I + \dot y (t) J = V_0 i + v_w I. 
\end{align}

Using the derived dissipative force, the power expended can be derived as shown in \eqref{eq: power}.

\textbf{Remark:} While this derivation holds for a UAV, 
for a ground vehicle (e.g., a truck),~\eqref{eq: drag_force} is replaced by $\mathbf{F}_d = Mgf_r$, where $f_r$ is the constant rolling resistance coefficient and $M$ is the nominal mass. The rolling resistance force will dominate aerodynamic drag for such a vehicle. With this change, the equivalence between energy and time-optimality still holds, and the cost (energy) remains linearly dependent on vehicle mass.

\textbf{Remark:} Though the derivation assumes the dissipative force to be linearly dependent on the vehicle's mass, the equivalence of time-optimality and energy-optimality shown in Theorem~\ref{theorem} holds for an affine function as well.

\end{document}